\title{Error estimates in balanced norms of finite element methods
       for higher order reaction-diffusion problems}
\author{Sebastian Franz\footnote{
          Institute of Scientific Computing, Technische Universit\"at Dresden, Germany.
          \mbox{e-mail}: sebastian.franz@tu-dresden.de}
        \and
        Hans-G. Roos\footnote{
          Institute of Numerical Mathematics, Technische Universit\"at Dresden, Germany.
          \mbox{e-mail}: hans-goerg.roos@tu-dresden.de}        
        }
\date{\today}
\let\my@saved@original@eqref\eqref 
\renewcommand*{\eqref}[1]{
  \begingroup
    \let\normalfont\relax
    \my@saved@original@eqref{#1}
  \endgroup
}
\newcommand{\ord}[1]{\mathcal{O}\left(#1\right)}
\DeclareMathOperator{\meas}{meas}
\newcommand{\e}{\mathrm{e}}
\newcommand{\grad}{\nabla}
\newcommand{\eps}{\varepsilon}
\newcommand{\norm}[2]{\|{#1}\|_{#2}}
\newcommand{\snorm}[2]{|{#1}|_{#2}}
\newcommand{\tnorm}[1]{\left|\!\!\;\left|\!\!\;\left| {#1}
                       \right|\!\!\;\right|\!\!\;\right|}
\newcommand{\enorm}[1]{\tnorm{#1}_\eps}
\newcommand{\I}{\mathcal{I}}
\newcommand{\QS}{\mathcal{Q}}
\newcounter{tmp}
\newcommand{\makeballnumber}[1]{\setcounter{tmp}{\theenumi}%
\setcounter{enumi}{#1}%
\leavevmode \csname beamer@@tmpl@enumerate item\endcsname%
\setcounter{enumi}{\thetmp}}
\newcommand{\makeball}{\leavevmode \csname beamer@@tmpl@itemize item\endcsname}
\definecolor{seb}{rgb}{0.9,0,0}
\renewcommand*\env@matrix[1][r]{\hskip -\arraycolsep
  \let\@ifnextchar\new@ifnextchar
  \array{*\c@MaxMatrixCols #1}}
\numberwithin{equation}{section}
\renewcommand{\phi}{\varphi}
\DeclareMathAlphabet{\mathcal}{OMS}{cmsy}{m}{n}
\newcommand{\bnorm}[1]{\tnorm{#1}_{b}}
\theoremstyle{plain}
\newtheorem{theorem}{Theorem}[section]
\newtheorem{lemma}[theorem]{Lemma}
\newtheorem{assumption}[theorem]{Assumption}
\newtheorem{remark}[theorem]{Remark}
\begin{document}
 \pagestyle{fancy}
  \maketitle

  \begin{abstract} 
    Error estimates of finite element methods for reaction-diffusion
    problems are often realised in the related energy norm. In the singularly perturbed
    case, however, this norm is not adequate. A different scaling of the $H^m$ seminorm
    for $2m$-th order problems leads to a balanced norm which reflects the layer behaviour 
    correctly.
    
    We prove error estimates in such balanced norms and improve thereby existing
    estimates known in literature.
  \end{abstract}

  \textit{AMS subject classification (2010):} 65N12, 65N15, 65N30

  \textit{Key words:} balanced norms, reaction-diffusion problems, finite element methods

\section{Introduction}
  We shall examine the finite element method for the numerical
  solution of a singularly perturbed linear elliptic $2m-$th order
  boundary value problem in two dimensions. In the weak form it is given by
  \begin{equation}\label{1.1}
    \eps^{2k}(\grad^m u,\grad^m v)+\tilde a(u,v)=(f,v)\quad
    \forall v\in H_0^m(\Omega),
  \end{equation}
  where $\Omega=(0,1)^2$, $0<\eps \ll 1$ is a small positive parameter, $1\le k\le m$ and $f$ is sufficiently smooth.
  We assume that the bilinear form $\tilde a(\cdot,\cdot)$ is related to a $2(m-k)-$th order operator
  and $\tilde a(u,u)$ is equivalent to $\norm{u}{H^{m-k}}^2$.

  The Lax-Milgram theorem tells us that the problem has a unique solution $u\in H_0^m(\Omega)$
  which is sufficiently smooth for smooth data and satisfies in the energy norm
  \begin{equation}\label{1.2}
    \enorm{u}:=\eps^{k}|u|_{H^m}+\norm{u}{H^{m-k}}\lesssim \norm{f}{L^2}.
  \end{equation}
  Here and in the following we use the following notation: if $A\lesssim B$ then there exists a (generic)
  constant $C$ independent of $\eps$ (and later also of the mesh used) such that $A\le C\,B$.

  The error of a finite element approximation $u^N\in V^N$ satisfies
  \begin{equation}\label{1.3}
    \enorm{u-u^N}\lesssim \min_{v^N\in V^N}\enorm{u-v^N}
  \end{equation}
  for any finite dimensional space $V^N\subset H_0^m(\Omega)$.

  If we use $C^{m-1}$-splines, piecewise polynomial of degree $2m-1$, on a properly defined Shishkin mesh 
  with $N$ cells in each direction, then one can prove for the interpolation error of the Hermite
  interpolant $u^I\in V^N$
  \begin{equation}\label{1.4}
    \enorm{u-u^I}\lesssim \left(\eps^{1/2}(N^{-1}\ln N)^m+N^{-(m+1)}\right).
  \end{equation}
  It follows that the error $u-u^N$ also satisfies such an estimate. Some special one-dimensional cases are
  discussed, for instance, in \cite{LX06,SS95,Xenophontos17}.

  However, a typical boundary layer function $\eps^{m-k}\exp(-x/\eps)$ of our given problem
  measured in the norm $\enorm{\cdot}$ is of order $\ord{\eps^{1/2}}$.
  Consequently, error estimates in this norm are less valuable as for convection
  diffusion equations.
  Therefore, we ask the fundamental question:\\
  \emph{Is it possible to prove error estimates in the balanced norm}
  \begin{equation}\label{1.5}
    \bnorm{v}:=\eps^{k-1/2}|v|_{H^m}+\norm{v}{H^{m-k}}\quad ?
  \end{equation}

  For higher order equations ($m\ge 2$), even in 1d nothing is known concerning estimates in the
  balanced norm for the Galerkin finite element method. The only exception is \cite{FrR15}, where a fourth-order problem
  is discretised with a mixed finite element method.
  
  The outline of this paper is as follows. In Section~\ref{sec:2} we present a new idea to 
  derive balanced error estimates for second order problems, improving the result in \cite{RSch15}.
  In Section~\ref{sec:3} we generalise the idea from Section~\ref{sec:2} to higher order problems
  in detail for the 1d case and give guiding principles for the (very technical) 2d case.
  
  \textbf{Notation:} We denote by $(\cdot,\cdot)_D$ the $L^2$-scalar product on $D$ and by $\norm{\cdot}{L^2(D)}$
  the associated $L^2$-norm over $D$. Furthermore by $\snorm{\cdot}{H^k(D)}$, $\norm{\cdot}{H^k(D)}$ and $\norm{\cdot}{W^{k,\infty}(D)}$
  we denote the Sobolev-seminorm and norms in $H^k(D)=W^{k,2}(D)$ and $W^{k,\infty}(D)$. 
  In the case of $D=\Omega$ we may skip the reference to the domain.

\section{An improved estimate in a balanced norm for second order problems}\label{sec:2}

  Let us consider the case $m=k=1$ and the discretization of
  \begin{gather}\label{eq:standard1}
    \eps^2  (\grad u,\grad v) + (c u,v) = (f,v)\quad \forall v \in V=H_0^1(\Omega),
  \end{gather}
  where $c\geq\gamma>0$ by linear finite elements on S-type meshes \cite{RL99}. 
  In \cite{RSch15} it was proved (on a Shishkin mesh)
  \begin{equation}\label{2.5}
    \bnorm{u-u^N}\lesssim N^{-1}(\ln N)^{3/2}+N^{-2}.
  \end{equation}

  It was an open question to remove the factor $(\ln N)^{1/2}$ from \eqref{eq:standard1}. Here
  we modify the technique from \cite{RSch15} to realise that goal and use the same technique in 
  Section \ref{sec:3} for higher order problems.

  In \cite{RSch15} the $L^2$-projection $\pi u\in V^N$ from $u$ was used 
  instead of the Lagrange interpolant. Based on
  \[
    u-u^N=u-\pi u+\pi u-u^N
  \]
  we estimated for constant $c$ the discrete error $\pi u-u^N$ starting from:
  \begin{multline}\label{st}
    \enorm{\pi u-u^N}^2
      \lesssim \eps^2\norm{\grad (\pi u-u^N)}{L^2}^2+c\,\norm{\pi u-u^N}{L^2}^2\\
      =\eps^2(\grad(\pi u-u),\grad (\pi u-u^N))+c\,(\pi u-u,\pi u-u^N).
  \end{multline}
  With $(\pi u-u,\xi)=0$ for $\xi\in V^N$, the last term vanishes and the problem was to estimate $\norm{\grad(\pi u-u)}{L^2}$.
  The use of the global projection leads to difficulties, especially in 2D: it is known that the
  $L^2$ projection is not on every mesh $L^p$ stable, and there are examples which show that for the
  $W^{1,p}$ stability restrictions on the mesh are necessary even in the one-dimensional case
  \cite{CT87,Oswald13}.
  
  Here we modify the definition of the projection into $V^N$, the space of piecewise polynomials of degree $p\geq 1$ in each coordinate direction. 
  In order to do so we start by defining our mesh for the number $N$ of cells in each direction divisible by 4. 
  Let $\phi$ be a monotonically increasing function with $\phi(0)=0$, $\phi(1/2)=\ln N$ -- 
  the so-called \textit{mesh-generating function} -- and $\psi:=\ln(-\phi)$ the \textit{mesh characterising
  function}, see \cite{RL99}. Furthermore let $\lambda:=\sigma\eps\ln N$ be the transition parameter,
  where $\sigma$ is a user chosen parameter to be specified later and $\lambda\leq 1/4$ is assumed.
  
  The idea for defining the transition parameter comes is related to the Assumption~\ref{ass:decomp} on a 
  solution decomposition, see \cite{HK90}.
  
  \begin{assumption}\label{ass:decomp}
    We assume the decomposition $u=v+\sum\limits_{k=1}^4w_k+\sum\limits_{k=1}^4c_k$ into a smooth part $v$, 
    boundary layer parts $w_k$ and corner layer parts $c_k$. To be more precise we assume
    for $0\leq i,j\leq p+1$
    \begin{align*}
      |\partial_x^i\partial_y^j v(x,y)| &\lesssim 1,\\
      |\partial_x^i\partial_y^j w_1(x,y)| &\lesssim\eps^{-i}\exp(-x/\eps),\\
      |\partial_x^i\partial_y^j c_1(x,y)| &\lesssim\eps^{-(i+j)}\exp(-(x+y)/\eps)
    \end{align*}
    and similarly for the remaining terms.
  \end{assumption}
  
  Now we have $|w_1(\lambda,y)|\lesssim N^{-\sigma}$ and the size of the layer components in $\Omega_c$
  can be adjusted by $\sigma$. 
  
  The mesh-points are then defined by
  \[
    x_i=y_i=\begin{cases}
              \sigma\eps\phi\left(\frac{2i}{N}\right),& i\in\{0,\dots,N/4\},\\
              \lambda+\left(\frac{4i}{N}-1\right)\left(\frac{1}{2}-\lambda\right),& i\in\{N/4,\dots,3N/4\},\\
              1-\sigma\eps\phi\left(2-\frac{2i}{N}\right),& i\in\{3N/4,\dots,N\}.
            \end{cases}
  \]
  By drawing axis-parallel lines through the so-defined mesh points we obtain an S-Type mesh with 
  equidistant cells in the coarse region $\Omega_{c}:=(\lambda,1-\lambda)^2$ and anisotropic cells in the 
  layer region $\Omega\setminus\Omega_{c}$. Note that in the layer region the small mesh-sizes
  can be estimated by $h_i:=x_{i-1}-x_i\leq h$ and $k_j=y_{j+1}-y_j\leq h$ with
  \begin{gather}\label{eq:hbound1}
    \eps N^{-1}\ln N\lesssim h\lesssim \eps,
  \end{gather}
  and similarly for the $y$-direction.
  
  \begin{assumption}\label{ass:convex}
    Let the mesh-generating function $\phi$ be convex.
  \end{assumption}
  
  Most of the generating functions of S-type-meshes fulfil this assumption, i.e. the most prominent two
  \begin{itemize}
    \item Shishkin mesh: $\phi(t)=2t\ln N$,
    \item Bakhvalov-S-mesh: $\phi(t)=-\ln(1-2t(1-N^{-1}))$.
  \end{itemize}
  As a result of Assumption~\ref{ass:convex} the cells in the layer region adjacent to the transition line
  have a width of $h$ orthogonal to the transition line. We then define another domain by enlarging $\Omega_c$
  one ply of cells in each direction:
  \[
    \Omega_c^*:=(\lambda-h,1-(\lambda-h))^2.
  \]
  Let us denote by $\I$ the piecewise Gauß-Lobatto interpolation operator that uses as local interpolation points
  the quadrature nodes $(\hat x_k,\,\hat k_\ell)$ for $k,\ell\in\{0,\dots,p+1\}$ of the Gauss-Lobatto quadrature 
  rule. Furthermore, we denote by $\pi$ the weighted, $\Omega_c$-global $L^2$-projection $\pi v\in V^N$ defined by
  \[
    (c(v-\pi v),\omega)_{\Omega_c}=0\quad\forall \omega\in V^N,
  \]
  where we have denoted by $(\cdot,\cdot)_{\Omega_c}$ the restriction of the $L^2$-scalar product to $\Omega_c$.
  Additionally, we denote by $\chi_\tau\in V^N$ on each element $\tau\in\Omega_c^*\setminus\Omega_c$ the discrete function with
  \[
    \chi_\tau(\hat x_k,\hat y_\ell)=\begin{cases}
                                      1,&(\hat x_k,\hat y_\ell)\in\partial\Omega_c,\\
                                      0,&otherwise.
                                    \end{cases}
  \]
  Note that on $\Omega_c^*\setminus\Omega_c$ only two types of $\chi_\tau$ exist: They are one in either exactly one corner or on 
  exactly one side of $\tau$.
  
  Now we can finally define our new interpolation operator. Let the interpolation operator $P$ into $V^N$ for $u=v+w$, 
  where $w=\sum\limits_{k=1}^4w_k+\sum\limits_{k=1}^4c_k$, be defined by
  \begin{align*}
    Pw|_\tau&:=\begin{cases}
                  0, & \tau\subset\Omega_c,\\
                  \I w, & \tau\subset\Omega\setminus\Omega_c^*,\\
                  \I[(1-\chi_\tau)w], & \tau\subset\Omega_c^*\setminus\Omega_c,
               \end{cases}&
    Pv|_\tau&:=\begin{cases}
                  \pi v|_\tau, & \tau\subset\Omega_c,\\
                  \I v, & \tau\subset\Omega\setminus\Omega_c^*,\\
                  \I[(1-\chi_\tau) v+\chi_\tau\pi v], & \tau\subset\Omega_c^*\setminus\Omega_c.
               \end{cases}
  \end{align*}
  
  \begin{lemma}\label{lem:L2-projection}
    For any $v\in W^{p+1,\infty}(\Omega_c)$ holds
    \[
      \norm{Iv-\pi v}{L^\infty(\partial\Omega_c)}\lesssim N^{-(p+1)}.
    \]
  \end{lemma}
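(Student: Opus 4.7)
The plan is to insert the exact function $v$ as an intermediary via the triangle inequality
\[
  \norm{\I v-\pi v}{L^\infty(\pt\Omega_c)}
    \le \norm{v-\I v}{L^\infty(\pt\Omega_c)}+\norm{v-\pi v}{L^\infty(\pt\Omega_c)}
\]
and to bound the two resulting contributions separately. The key structural observation is that both $\I v$ and $\pi v$ live only on $\Omega_c=(\lambda,1-\lambda)^2$, on which the tensor-product mesh is quasi-equidistant with element width of order $N^{-1}$. The whole argument can therefore be carried out in a ``standard smooth function on a quasi-uniform mesh'' setting.

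For the interpolation term, classical reference-element estimates for the tensor-product Gauss--Lobatto interpolation operator combined with scaling give, on every element $\tau\subset\Omega_c$,
\[
  \norm{v-\I v}{L^\infty(\tau)}\lesssim N^{-(p+1)}\norm{v}{W^{p+1,\infty}(\tau)},
\]
so taking the maximum over the (finitely many) boundary elements of $\Omega_c$ produces the desired $N^{-(p+1)}$ bound. For the projection term, note that the weight satisfies $\gamma\le c\lesssim 1$, so the weighted projection $\pi$ is spectrally equivalent to the standard $L^2$-projection on $V^N|_{\Omega_c}$. Since the mesh on $\Omega_c$ is genuinely quasi-uniform, I would invoke the classical Crouzeix--Thom\'ee $L^\infty$-stability result \cite{CT87} for the (weighted) $L^2$-projection in tensor-product form, i.e.\ $\norm{\pi v}{L^\infty(\Omega_c)}\lesssim\norm{v}{L^\infty(\Omega_c)}$. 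Together with the best-approximation consequence $\norm{v-\pi v}{L^\infty(\Omega_c)}\lesssim \inf_{\omega\in V^N}\norm{v-\omega}{L^\infty(\Omega_c)}$ and the pointwise interpolation bound already used above, this yields
\[
  \norm{v-\pi v}{L^\infty(\Omega_c)}
    \lesssim N^{-(p+1)}\norm{v}{W^{p+1,\infty}(\Omega_c)},
\]
and restriction to $\pt\Omega_c\subset\overline{\Omega_c}$ finishes the bound.

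The main obstacle is the $L^\infty$-stability of the weighted projection, because $\pi$ is defined \emph{globally} on $\Omega_c$ so no element-by-element argument is available and a naive inverse inequality would cost an extra $h^{-1}$. The rescue is that the mesh on $\Omega_c$ is quasi-uniform and the weight is bounded above and below, which is precisely the regime in which the quasi-uniform $L^\infty$-stability theory of the $L^2$-projection applies; once this stability is secured, the remaining steps are entirely routine and all constants are independent of $\eps$.
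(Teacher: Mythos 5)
Your argument is correct and rests on exactly the same two ingredients as the paper's proof, namely the $L^\infty$-stability of the weighted $L^2$-projection on the quasi-uniform mesh of $\Omega_c$ and the standard interpolation error bound $\norm{v-\I v}{L^\infty(\Omega_c)}\lesssim N^{-(p+1)}$; the paper merely packages them more compactly by writing $\I v-\pi v=\pi(\I v-v)$ (using that $\pi$ reproduces $V^N$) instead of your triangle inequality plus quasi-best-approximation step. The only cosmetic caveat is that ``spectral equivalence'' to the unweighted projection is an $L^2$ notion and does not by itself transfer $L^\infty$-stability; one should instead note that the stability proofs for quasi-uniform meshes apply verbatim to the weighted mass matrix since $\gamma\le c\lesssim 1$.
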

  \begin{proof}
    Using 
    $\pi (I v)= I v$ due to $\pi$ being a projection we have
    \begin{align*}
      \norm{Iv-\pi v}{L^\infty(\partial\Omega_c)}
        &\leq\norm{\pi(Iv-v)}{L^\infty(\Omega_c)}
        \lesssim\norm{Iv-v}{L^\infty(\Omega_c)},
    \end{align*}
    where we have used in the last step the $L^\infty$-stability of the $L^2$-projection on $\Omega_c$, see \cite{Oswald13}. 
    The result follows by standard interpolation error estimation on equidistant meshes.
    Alternatively to the $L^\infty$-stability an $L^\infty$-error estimate of the $L^2$-projection, see \cite{Nitsche75,Scott76}, could be used.
  \end{proof}
  
  We will use in the following the splitting of the error into the interpolation and discrete error given by
  \[
    u-u_N=(u-Pu)+(Pu-u^N)=:\eta+\xi.
  \]
  
  \begin{lemma}\label{lem:reaction}
    Let $\sigma\geq p+1$. Under the Assumption~\ref{ass:decomp} we have
    \[
      |(c\eta,\xi)|\lesssim\eps^{1/2}\Big(N^{-(p+1)}(\ln N)^{1/2}+\left(h+N^{-1}\max|\psi'|\right)^{p+1}\Big)\enorm{\xi}.
    \]
  \end{lemma}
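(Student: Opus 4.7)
The plan is to split the integral into the three subdomains appearing in the definition of $P$, namely $\Omega_c$, $\Omega_c^*\setminus\Omega_c$ and $\Omega\setminus\Omega_c^*$, and to bound each contribution by Cauchy--Schwarz, exploiting the defining orthogonality of $\pi$ wherever possible. On $\Omega_c$ one has $\eta=(v-\pi v)+w$, and since $\xi|_{\Omega_c}\in V^N$ the defining property of the weighted projection makes $(c(v-\pi v),\xi)_{\Omega_c}$ vanish. Only $(cw,\xi)_{\Omega_c}$ survives; using Assumption~\ref{ass:decomp} a direct computation gives $\norm{w}{L^2(\Omega_c)}\lesssim\eps^{1/2}\exp(-\lambda/\eps)=\eps^{1/2}N^{-\sigma}\leq\eps^{1/2}N^{-(p+1)}$, which together with $\norm{\xi}{L^2}\leq\enorm{\xi}$ already accounts for part of the first term of the claimed bound.

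On $\Omega\setminus\Omega_c^*$ one has $\eta=(v-\I v)+(w-\I w)$. For the smooth part, anisotropic tensor-product interpolation estimates on the layer cells (of size $h\times N^{-1}$ away from corners and $h\times h$ near them), combined with $\meas(\Omega\setminus\Omega_c^*)\lesssim\eps\ln N$, yield $\norm{v-\I v}{L^2}\lesssim(h^{p+1}+N^{-(p+1)})(\eps\ln N)^{1/2}$; the classical S-type interpolation estimate for exponential layer functions yields $\norm{w-\I w}{L^2}\lesssim\eps^{1/2}(N^{-1}\max|\psi'|)^{p+1}$. Adding these and using $\norm{\xi}{L^2}\leq\enorm{\xi}$ produces both the $\eps^{1/2}N^{-(p+1)}(\ln N)^{1/2}$ and the $\eps^{1/2}(h+N^{-1}\max|\psi'|)^{p+1}$ parts of the bound, once the logarithmic factor that multiplies $h^{p+1}$ is absorbed via $h\lesssim\eps$.

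The intermediate strip $\Omega_c^*\setminus\Omega_c$ is the delicate region and the main obstacle. Adding the two branches of $P$ on such an element $\tau$ gives $Pu|_\tau=\I u-\I[\chi_\tau(v-\pi v+w)]$, hence $\eta=(u-\I u)+\I[\chi_\tau(v-\pi v+w)]$. The nodal values of $\chi_\tau$ live on $\partial\Omega_c$, so $\norm{\I[\chi_\tau(v-\pi v)]}{L^\infty(\tau)}\le\norm{v-\pi v}{L^\infty(\partial\Omega_c)}$; combining the triangle inequality $\norm{v-\pi v}{L^\infty(\partial\Omega_c)}\leq\norm{v-\I v}{L^\infty(\Omega_c)}+\norm{\I v-\pi v}{L^\infty(\partial\Omega_c)}$ with the standard interpolation error on the equidistant mesh and Lemma~\ref{lem:L2-projection} gives $\norm{v-\pi v}{L^\infty(\partial\Omega_c)}\lesssim N^{-(p+1)}$, while the layer contribution is even smaller, $\norm{w}{L^\infty(\partial\Omega_c)}\lesssim N^{-\sigma}$. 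Together with the standard bound $\norm{u-\I u}{L^\infty(\Omega_c^*\setminus\Omega_c)}\lesssim N^{-(p+1)}$ and $\meas(\Omega_c^*\setminus\Omega_c)\lesssim h\lesssim\eps$, these $L^\infty$-bounds translate into $\norm{\eta}{L^2(\Omega_c^*\setminus\Omega_c)}\lesssim\eps^{1/2}N^{-(p+1)}$, which fits the first term of the claim. The crux is exactly this hand-over across $\partial\Omega_c$: without the $\chi_\tau$-correction the operator $P$ would fail to land in $V^N$, and Lemma~\ref{lem:L2-projection} is the precise tool that quantifies the jump between $\pi v$ and $\I v$ introduced by that correction.
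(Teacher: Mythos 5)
Your proposal is correct and follows essentially the same route as the paper: orthogonality of the weighted projection on $\Omega_c$ leaving only $(cw,\xi)_{\Omega_c}$, standard (anisotropic) interpolation estimates with a H\"older/measure argument in the layer region, and Lemma~\ref{lem:L2-projection} together with $\meas(\Omega_c^*\setminus\Omega_c)\lesssim h\lesssim\eps$ and the boundedness of the Gauss--Lobatto basis to control the $\chi_\tau$-correction on the strip. Your three-way geographic split is only a cosmetic reorganisation of the paper's decomposition $(c(u-\I u),\xi)_{\Omega\setminus\Omega_c}+(c(\I u-Pu),\xi)_{\Omega_c^*\setminus\Omega_c}$, since on the strip $\eta=(u-\I u)+\I[\chi_\tau(v-\pi v+w)]$ is exactly $(u-\I u)+(\I u-Pu)$.
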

  \begin{proof}
    We will prove the estimate in the coarse and remaining region separately. 
    Let us start on $\Omega_c$. 
    By definition of $P$ and the $L^2$-orthogonality of the $L^2$-error we have
    \[
      |(c\eta,\xi)_{\Omega_c}|
        =|(cw,\xi)_{\Omega_c}|
        \lesssim\norm{w}{L^2(\Omega_c)}\norm{\xi}{L^2(\Omega_c)}
        \lesssim\eps^{1/2}N^{-\sigma}\enorm{\xi}.
    \]
    In the remaining domain we have
    \[
      (c\eta,\xi)_{\Omega\setminus\Omega_c}
        =(c(u-Iu),\xi)_{\Omega\setminus\Omega_c}+(c(Iu-Pu),\xi)_{\Omega_c^*\setminus\Omega_c},
    \]
    where we extended the application of $I$ into the ply of elements around $\Omega_c$.
    For the first term it holds with a Hölder inequality
    \begin{align*}
      |(c(u-Iu),\xi)_{\Omega\setminus\Omega_c}|
        &\lesssim\left(\meas^{1/2}(\Omega\setminus\Omega_c)\norm{v-Iv}{L^\infty(\Omega\setminus\Omega_c)}
                +\norm{w-Iw}{L^2(\Omega\setminus\Omega_c)}\right)\enorm{\xi}\\
        &\lesssim\eps^{1/2}\left(N^{-(p+1)}(\ln N)^{1/2}+\left(h+N^{-1}\max|\psi'|\right)^{p+1}\right)\enorm{\xi},
    \end{align*}
    while for the second term we have using the special function $\chi\in V^N$
    \begin{align*}
      |(c(Iu-Pu),\xi)_{\Omega_c^*\setminus\Omega_c}|
        &\lesssim(\norm{Iv-Pv}{L^2(\Omega_c^*\setminus\Omega_c)}+\norm{Iw-Pw}{L^2(\Omega_c^*\setminus\Omega_c)})\enorm{\xi}\\
        &\lesssim\left(\norm{Iv-\pi v}{L^\infty(\partial\Omega_c)}+\norm{Iw}{L^\infty(\partial\Omega_c)}
               \right)\norm{\chi}{L^2(\Omega_c^*\setminus\Omega_c)}\enorm{\xi}.
    \end{align*}
    Applying Lemma~\ref{lem:L2-projection}, the boundedness of Gauss-Lobatto-basis functions and the $L^\infty$-stability of $I$
    we obtain
    \begin{align*}
      |(c(Iu-Pu),\xi)_{\Omega_c^*\setminus\Omega_c}|
        &\lesssim\meas^{1/2}(\Omega_c^*\setminus\Omega_c)\left(\norm{Iv-\pi v}{L^\infty(\partial\Omega_c)}+\norm{w}{L^\infty(\partial\Omega_c)}
               \right)\enorm{\xi}\\
        &\lesssim\eps^{1/2}\left(N^{-(p+1)}+N^{-\sigma}\right)\enorm{\xi},
    \end{align*}
    where $\meas(\Omega_c^*\setminus\Omega_c)\lesssim h\lesssim\eps$ was used. With $\sigma\geq p+1$ the proof is finished.
  \end{proof}
  
  The final ingredient for our proof is the estimation of the interpolation error in the balanced norm.
  
  \begin{lemma}\label{lem:interpol}
    Let $\sigma\geq p+1$. Under the Assumptions~\ref{ass:decomp} and \ref{ass:convex} we have
    \[
      \bnorm{\eta}\lesssim\left(h+N^{-1}\max|\psi'|\right)^p.
    \]
  \end{lemma}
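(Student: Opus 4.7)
The plan is to decompose the balanced norm as $\bnorm{\eta}=\eps^{1/2}\snorm{\eta}{H^1}+\norm{\eta}{L^2}$ and to estimate each piece separately on the three subdomains $\Omega_c$, $\Omega_c^*\setminus\Omega_c$, and $\Omega\setminus\Omega_c^*$ that appear in the definition of $P$. Throughout I use the decomposition $u=v+w$ of Assumption~\ref{ass:decomp}, with $w$ collecting all boundary and corner layer components.

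On the coarse interior $\Omega_c$ the definition of $P$ gives $\eta=(v-\pi v)+w$. Since the mesh inside $\Omega_c$ is tensor-product equidistant, $\pi$ is both $L^2$- and $H^1$-stable, and standard Bramble-Hilbert estimates yield $\norm{v-\pi v}{L^2(\Omega_c)}\lesssim N^{-(p+1)}$ and $\snorm{v-\pi v}{H^1(\Omega_c)}\lesssim N^{-p}$. For the layer part, Assumption~\ref{ass:decomp} together with $\lambda=\sigma\eps\ln N$ and $\sigma\ge p+1$ produces the pointwise bound $|w|\lesssim N^{-\sigma}$ on $\Omega_c$, and integrating $|\grad w|\lesssim\eps^{-1}\mathrm{e}^{-x/\eps}$ in the layer-perpendicular direction gives $\norm{w}{L^2(\Omega_c)}\lesssim\eps^{1/2}N^{-\sigma}$ and $\snorm{w}{H^1(\Omega_c)}\lesssim\eps^{-1/2}N^{-\sigma}$. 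After pairing the $H^1$-parts with the $\eps^{1/2}$ prefactor, all contributions are $\lesssim N^{-p}\le(h+N^{-1}\max|\psi'|)^p$.

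On the fine layer region $\Omega\setminus\Omega_c^*$ we have $Pu=\I u$, and classical anisotropic tensor-product interpolation estimates on S-type meshes (as in \cite{RL99}) give, for each layer component, $\norm{w-\I w}{L^2}\lesssim\eps^{1/2}(h+N^{-1}\max|\psi'|)^{p+1}$ and $\snorm{w-\I w}{H^1}\lesssim\eps^{-1/2}(h+N^{-1}\max|\psi'|)^p$; the smooth part contributes only $\ord{\eps^{1/2}N^{-p}}$ in $H^1$ on a strip of measure $\lesssim\eps$. Both terms fit into the claimed bound once the $H^1$-part is multiplied by $\eps^{1/2}$. On the transition ply $\Omega_c^*\setminus\Omega_c$ I use the identity
\[
  \eta=(u-\I u)+\I[\chi_\tau(v-\pi v)]+\I[\chi_\tau w],
\]
which follows directly from the definition of $P$ and linearity of $\I$. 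The first summand is handled as before by anisotropic interpolation. The two correction terms are discrete functions supported on cells of measure $\lesssim\eps$ whose nodal values away from $\partial\Omega_c$ vanish; Lemma~\ref{lem:L2-projection} controls $\I v-\pi v$ in $L^\infty(\partial\Omega_c)$ by $N^{-(p+1)}$, while Assumption~\ref{ass:decomp} and $\sigma\ge p+1$ give $\norm{w}{L^\infty(\partial\Omega_c)}\lesssim N^{-\sigma}\le N^{-(p+1)}$. Together with $L^\infty$-boundedness of the Gauss-Lobatto basis this produces $L^2$-contributions of order $\eps^{1/2}N^{-(p+1)}$. For the $H^1$-contribution an inverse inequality on the narrow transition cells costs a factor $h^{-1/2}$, but the $\eps^{1/2}$ prefactor of the balanced norm combined with the lower bound $h\gtrsim\eps N^{-1}\ln N$ from \eqref{eq:hbound1} keeps the resulting contribution below $(h+N^{-1}\max|\psi'|)^p$.

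The principal technical obstacle is this last step: one must check carefully that, on the anisotropic transition cells of shape $h\times N^{-1}$ (or $h\times h$ in the corners), the combination of the exponential smallness $\lesssim N^{-(p+1)}$ at $\partial\Omega_c$, the inverse inequality, the mesh-width bound \eqref{eq:hbound1}, and the $\eps^{1/2}$ weighting all collapse cleanly to $(h+N^{-1}\max|\psi'|)^p$ without spurious logarithmic factors. The tight coupling between the choice $\sigma\ge p+1$ and \eqref{eq:hbound1} is what makes this book-keeping work, and it is the most delicate part of the argument.
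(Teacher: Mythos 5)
Your argument follows the paper's proof essentially step for step: the same three-region splitting induced by the definition of $P$, the same treatment of $v-\pi v$ and of the exponentially small layer part on $\Omega_c$, standard anisotropic interpolation on the fine region, and on the transition ply the same identity $Iu-Pu=\I[\chi_\tau(v-\pi v)]+\I[\chi_\tau w]$ handled via Lemma~\ref{lem:L2-projection}, the $L^\infty$-bound on $w$ at $\partial\Omega_c$, and an inverse inequality balanced against $h\gtrsim\eps N^{-1}\ln N$. The proposal is correct and matches the paper's approach.
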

  \begin{proof}
    We start by splitting the error into
    \[
      \bnorm{\eta}
        \lesssim \tnorm{\eta}_{b,\Omega_c}+\tnorm{u-Iu}_{b,\Omega\setminus\Omega_c}+\tnorm{Iu-Pu}_{b,\Omega_c^*\setminus\Omega_c}.
    \]
    By standard anisotropic interpolation error estimation we obtain
    \[
      \tnorm{u-Iu}_{b,\Omega\setminus\Omega_c}
        \lesssim \left(h+N^{-1}\max|\psi'|\right)^p.
    \]
    Using the definition of $P$ on $\Omega_c$ we have
    \begin{align*}
      \tnorm{\eta}_{b,\Omega_c}^2
        &\leq \eps\norm{\grad(v-\pi v)}{L^2(\Omega_c)}^2+
              \eps\norm{\grad w}{L^2(\Omega_c)}^2+
              \gamma\norm{v-\pi v}{L^2(\Omega_c)}^2+
              \gamma\norm{w}{L^2(\Omega_c)}^2\\
        &\lesssim \eps N^{-2p}+N^{-2\sigma}+N^{-2(p+1)}.
    \end{align*}
    For the remaining term we apply an inverse inequality. By Assumption~\ref{ass:convex}
    the small size of the cells in $\Omega_c^*\setminus\Omega_c$ is $h$ and this can be bounded from below by
    \begin{gather}\label{eq:hbound2}
      h\geq 4\sigma\eps N^{-1}\ln N,
    \end{gather}
    see also \eqref{eq:hbound1}.
    Thus we get
    \begin{align*}
      \tnorm{Iu-Pu}_{b,\Omega_c^*\setminus\Omega_c}
        &\lesssim \eps^{1/2}\norm{\grad(Iu-Pu)}{L^2(\Omega_c^*\setminus\Omega_c)}+\norm{Iu-Pu}{L^2(\Omega_c^*\setminus\Omega_c)}\\
        &\lesssim \left(\frac{\eps}{\min\{h,N^{-1}\}}+\eps^{1/2}\right)\norm{Iu-Pu}{L^\infty(\Omega_c^*\setminus\Omega_c)}\\
        &\lesssim N (N^{-\sigma}+N^{-(p+1)}),
    \end{align*}
    where Lemma~\ref{lem:L2-projection} was used in the last step. Together with $\sigma\geq p+1$ the proof is complete.
  \end{proof}
  Using these Lemmas we obtain the main result for this section.
  \begin{theorem}\label{thm:reactdiff}
    Let $\sigma\geq p+1$ and Assumptions~\ref{ass:decomp} and \ref{ass:convex} hold. 
    Then we have for the solutions $u$ of \eqref{eq:standard1} and $u^N$ of the corresponding Galerkin method
    \[
      \bnorm{u-u^N}\lesssim \left(h+N^{-1}\max|\psi'|\right)^p.
    \]
  \end{theorem}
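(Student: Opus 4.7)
The plan is to follow the standard approach of decomposing the error as $u-u^N = \eta + \xi$, where $\eta = u-Pu$ is handled by the interpolation estimate in Lemma~\ref{lem:interpol}, and the discrete error $\xi = Pu - u^N \in V^N$ is attacked via Galerkin orthogonality. Since the interpolation estimate already gives $\bnorm{\eta} \lesssim (h + N^{-1}\max|\psi'|)^p$, all effort goes into establishing the same bound for $\xi$.

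First I would exploit Galerkin orthogonality with the test function $v^N = \xi$ to obtain
\begin{equation*}
  \eps^2(\grad\xi,\grad\xi) + (c\xi,\xi) = -\eps^2(\grad\eta,\grad\xi) - (c\eta,\xi),
\end{equation*}
so that $\enorm{\xi}^2 \lesssim \eps^2|(\grad\eta,\grad\xi)| + |(c\eta,\xi)|$. The reaction term is controlled by Lemma~\ref{lem:reaction}, which delivers the crucial factor $\eps^{1/2}$ in front of the convergence order. For the diffusion term I would apply Cauchy--Schwarz and use the balanced interpolation bound in the form $\eps|\eta|_{H^1} \le \eps^{1/2}\bnorm{\eta}$, yielding
\begin{equation*}
  \eps^2|(\grad\eta,\grad\xi)| \le \eps|\eta|_{H^1}\cdot \eps|\xi|_{H^1} \le \eps^{1/2}\bnorm{\eta}\,\enorm{\xi}.
\end{equation*}
Combining both estimates and absorbing one factor of $\enorm{\xi}$ gives the intermediate bound $\enorm{\xi} \lesssim \eps^{1/2}(h + N^{-1}\max|\psi'|)^p$.

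The final step is to pass from the energy norm to the balanced norm on $\xi$. This is essentially free: since $\eps|\xi|_{H^1} \le \enorm{\xi}$, multiplying by $\eps^{-1/2}$ gives $\eps^{1/2}|\xi|_{H^1} \le \eps^{-1/2}\enorm{\xi}$, and of course $\norm{\xi}{L^2} \le \enorm{\xi}$. Hence $\bnorm{\xi} \lesssim \eps^{-1/2}\enorm{\xi} \lesssim (h + N^{-1}\max|\psi'|)^p$, and adding the interpolation estimate for $\bnorm{\eta}$ completes the proof.

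The only potentially delicate point is the appearance of the factor $\eps^{1/2}$ in the bound on $\enorm{\xi}$; without this gain, the $\eps^{-1/2}$ blow-up coming from the norm conversion would be fatal. This is precisely where the modified interpolant $P$ and the careful construction in Lemmas~\ref{lem:L2-projection}--\ref{lem:reaction} pay off, so the main work in the proof of Theorem~\ref{thm:reactdiff} itself is just to assemble these ingredients; no further technical obstacles arise at this stage.
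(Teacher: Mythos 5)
Your proposal is correct and follows essentially the same route as the paper's own proof: Galerkin orthogonality plus coercivity, Cauchy--Schwarz on the diffusion term using $\eps|\eta|_{H^1}\le\eps^{1/2}\bnorm{\eta}$, Lemma~\ref{lem:reaction} for the reaction term, the resulting $\eps^{1/2}$ gain in $\enorm{\xi}$ to offset the $\eps^{-1/2}$ loss when passing to $\bnorm{\xi}$, and finally the triangle inequality with Lemma~\ref{lem:interpol}. No gaps.
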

  \begin{proof}
    Let us start with the discrete error $\xi$. Using coercivity in the energy norm and Galerkin orthogonality we have
    \[
      \enorm{\xi}^2\leq \eps^{1/2}\bnorm{\eta}\enorm{\xi}+|(c\eta,\xi)|.
    \]
    With Lemma~\ref{lem:reaction} we get
    \[
      \enorm{\xi}^2\lesssim \eps^{1/2}(\bnorm{\eta}+\left(h+N^{-1}\max|\psi'|\right)^p)\enorm{\xi}
    \]
    and therefore
    \[
      \eps^{1/2}\norm{\grad\xi}{L^2}
        \leq \eps^{-1/2}\enorm{\xi}
        \lesssim \bnorm{\eta}+\left(h+N^{-1}\max|\psi'|\right)^p.
    \]
    Together with the energy-norm result for $\xi$
    \[
      \norm{\xi}{L^2}\leq\enorm{\xi}\lesssim \left(h+N^{-1}\max|\psi'|\right)^p
    \]
    we have
    \[
      \bnorm{\xi}\lesssim \left(h+N^{-1}\max|\psi'|\right)^p.
    \]
    Now the triangle inequality and Lemma~\ref{lem:interpol} yield the assertion
    \[
      \bnorm{u-u^N}
        \leq \bnorm{\eta}+\bnorm{\xi}
        \lesssim \left(h+N^{-1}\max|\psi'|\right)^p.\qedhere
    \]
  \end{proof}
  \begin{remark}
    In \cite{Roos17} we proved for linear elements on S-type meshes the estimate
    \begin{equation}\label{S-type}
      \|u-u^N\|_{b}\lesssim h+N^{-1}(\ln N)^{1/2}\max |\psi'|
    \end{equation}
    under the assumption
    \begin{equation}
      N^{-1}\lesssim \phi(1/N).
    \end{equation}
    This assumption guarantees that the minimal mesh size ($\phi$ is convex and monotonically increasing) is not
    too small, which is guaranteed for Shishkin and Bakhvalov-Shishkin meshes, but not, for instance,
    for polynomial Shishkin-meshes.
    Our new approach improves upon the estimate \eqref{S-type} by the factor $(\ln N)^{1/2}$
    without this assumption.
  \end{remark}

\section{Higher order problems}\label{sec:3}
  Let us consider the higher-order version of our problem in 1d, i.e.
  \begin{equation}\label{3.1}
    \eps^{2k}(u^{(m)},v^{(m)})+\tilde a(u,v)=(f,v)\quad
    \forall v\in H_0^m((0,1)),
  \end{equation}
  where $\tilde a(\cdot,\cdot)$ is equivalent to $\norm{\cdot}{H^{m-k}((0,1))}$.
  We sketch the rather technical extension into 2d and general polynomial degrees in Remark~\ref{rem:ext2d}.
  We assume for our analysis to work a solution decomposition of $u$.
  \begin{assumption}\label{ass:HOdecomp1}
    We assume a decomposition $u=v+w$ into a smooth part $v$ and boundary layer parts $w_1$, $w_2$, for which holds
    \begin{align*}
      |\partial_x^i v(x,y)| &\lesssim 1,&
      |\partial_x^i w_1(x,y)| &\lesssim \eps^{m-k-i}\exp{-x/\eps},
    \end{align*}    
    where $0\leq i\leq 2m$ and analogously for $w_2$.
  \end{assumption}
  The mesh for the problem of this section is a 1d-version of the S-type mesh from the previous section with
  $\Omega_c=(\lambda_x,1-\lambda_x)$ and $\Omega_c^*=(\lambda_x-h,1-\lambda_x+h)$.
  
  The discrete space $V^N$ is the $H^m_0$-conforming space of Hermite-polynomials of degree $p=2m-1$.
  Beside the canonical Hermite-interpolation $I$ we introduce a Ritz-projection $\pi$ into $V^N(\Omega_c)$ by
  \begin{align*}
    \tilde a(v-\pi v,\chi)&=0 \quad \text{in $\Omega_c$ for all }\chi\in V^N(\Omega_c),\\
    \partial_x^n(v-\pi v)&=0 \quad \text{on $\partial\Omega_c$ for all }n\in\{0,\dots,m-k-1\}.
  \end{align*}
  It is well known \cite{Natterer77}, that on the uniform mesh $\Omega_c$ the error bound 
  \begin{gather}\label{eq:Ritz}
    \norm{v-\pi v}{L^\infty(\Omega_c)}\lesssim N^{-(p+1)}
  \end{gather}
  holds for polynomial degrees $p\geq 2$.

%
  Now the second interpolation operator $Pu\in V^N$ is given for $u=v+w$ by
  \begin{align*}
    Pw\big|_\tau&=\begin{cases}
                    Iw\big|_\tau & \tau\subset\Omega\setminus\Omega_c^*,\\
                    0            & \tau\subset\Omega_c,
                  \end{cases}&
    Pv\big|_\tau&=\begin{cases}
                    Iw\big|_\tau    & \tau\subset\Omega\setminus\Omega_c^*,\\
                    \pi w\big|_\tau & \tau\subset\Omega_c.
                  \end{cases}
  \end{align*}
  Note that the definition of $P$ is complete by $Pu\in V^N$. Before we start with the analysis 
  we state a third assumption.
  \begin{assumption}\label{ass:tildea1}
    We assume for the bilinear form $\tilde a(\cdot,\cdot)$ to hold
    \[
      \tilde a(u,v)_{\Omega_c}\lesssim \norm{u}{W^{p,m-k}(\Omega_c)}\norm{v}{W^{q,m-k}(\Omega_c)}
    \]
    for $p=q=2$ and $p=\infty,\,q=1$.
  \end{assumption}
  This assumption is fulfilled for symmetric bilinear forms $\tilde a(\cdot,\cdot)$ equivalent to the $H^{m-k}$-norm.
  
  The analysis can now be conducted as in the previous section. We denote the error components by
  \[
    u-u^N=(u-Pu)+(Pu-u^N)=:\eta+\xi.
  \]

  \begin{lemma}\label{lem:tildea1}
    Let $\sigma\geq 2m=p+1$. Under the Assumption~\ref{ass:HOdecomp1} we have
    \[
      |\tilde a(\eta,\xi)|\lesssim\eps^{1/2}\Big(N^{-(m+k)}\left((Nh)^{k-1}+(\ln N)^{1/2}\right)+\left(h+N^{-1}\max|\psi'|\right)^{m+k}\Big)\enorm{\xi}.
    \]
  \end{lemma}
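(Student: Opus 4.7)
The plan is to follow the proof of Lemma~\ref{lem:reaction}, splitting
\[
  \tilde a(\eta,\xi) = \tilde a(\eta,\xi)_{\Omega_c} + \tilde a(\eta,\xi)_{\Omega_c^*\setminus\Omega_c} + \tilde a(\eta,\xi)_{\Omega\setminus\Omega_c^*}
\]
according to the three branches in the definition of $P$. On $\Omega_c$, where $\eta = (v-\pi v) + w$, the layer summand is handled using $\sigma\ge 2m$ together with Assumption~\ref{ass:HOdecomp1} to get $\norm{w}{H^{m-k}(\Omega_c)}\lesssim \eps^{1/2}N^{-\sigma}$, and hence a contribution of $\eps^{1/2}N^{-(m+k)}\enorm{\xi}$ through the $H^{m-k}$-equivalence of $\tilde a$. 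The Ritz summand is controlled by Assumption~\ref{ass:tildea1} with $(p,q)=(\infty,1)$, pairing the $W^{m-k,\infty}$-error of $\pi$ (of order $N^{-(m+k)}$ by a Natterer-type argument extending \eqref{eq:Ritz}) with $\norm{\xi}{W^{m-k,1}(\Omega_c)}$.

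On the deep layer $\Omega\setminus\Omega_c^*$ one has $\eta=u-Iu$, and anisotropic Hermite interpolation estimates together with $\meas(\Omega\setminus\Omega_c^*)\lesssim \eps\ln N$ (H\"older on the smooth summand) reproduce the two terms $\eps^{1/2}(\ln N)^{1/2}N^{-(m+k)}\enorm{\xi}$ and $\eps^{1/2}(h+N^{-1}\max|\psi'|)^{m+k}\enorm{\xi}$. On the transition cell $\Omega_c^*\setminus\Omega_c$ of width $h$ the function $Pu-Iu$ is the polynomial of degree $2m-1$ entirely determined by the jump data $\partial_x^j(\pi v - v - w)(\lambda)$ for $0\le j\le m-1$; expanding it in the scaled Hermite basis, each basis function has $H^{m-k}$-norm $\lesssim h^{j-(m-k)+1/2}$, and combining with the Ritz derivative estimates $|\partial_x^j(v-\pi v)(\lambda)|\lesssim N^{-(2m-j)}$ (which vanish for $j\le m-k-1$ by the Ritz boundary conditions) and the layer bound $|\partial_x^jw(\lambda)|\lesssim \eps^{m-k-j}N^{-\sigma}$, the dominant contribution sums to $\eps^{1/2}N^{-(m+k)}(Nh)^{k-1}\enorm{\xi}$, which is the origin of the $(Nh)^{k-1}$ factor in the claim.

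The main obstacle will be the bookkeeping on the transition cell, where the scaling exponents $h^{j-(m-k)+1/2}$ of the Hermite basis must be paired with the correct Ritz and layer derivative bounds at $\partial\Omega_c$ to produce precisely the exponent $k-1$ on $Nh$ rather than some neighbouring value. A related technicality is the recovery of the $\eps^{1/2}$ prefactor on $\Omega_c$: rather than applying Assumption~\ref{ass:tildea1} bluntly (which yields only $N^{-(m+k)}\enorm{\xi}$), one exploits the Ritz orthogonality $\tilde a(v-\pi v,\chi)_{\Omega_c}=0$ for $\chi\in V^N(\Omega_c)$ via a boundary-corrector splitting $\xi|_{\Omega_c} = \xi_0+\xi_b$, with $\xi_0\in V^N(\Omega_c)$ and $\xi_b$ supported on the two $\Omega_c$-cells adjacent to $\partial\Omega_c$, so that orthogonality annihilates the $\xi_0$ piece and only the $\xi_b$-contribution, controlled by inverse and trace estimates, survives. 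Collecting the three localised estimates then yields the announced bound.
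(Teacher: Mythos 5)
Your decomposition and almost all of your individual estimates coincide with the paper's own proof: the layer contribution on $\Omega_c$ via $\norm{w}{H^{m-k}(\Omega_c)}\lesssim\eps^{1/2}N^{-\sigma}$, H\"older plus anisotropic interpolation for $u-Iu$ on $\Omega\setminus\Omega_c$, and the Hermite-basis expansion of $Iu-Pu$ on the transition cell with the derivative data $|\partial_x^n(v-\pi v)(\lambda)|\lesssim N^{-(2m-n)}$ (vanishing for $n\le m-k-1$) and $|\partial_x^n w(\lambda)|\lesssim\eps^{m-k-n}N^{-\sigma}$; your bookkeeping $\sum_{j=0}^{k-1}h^{1/2}N^{-(m+k)}(Nh)^j$ reproduces the paper's \eqref{eq:piv-error}--\eqref{eq:Pw-error} exactly, including the origin of $(Nh)^{k-1}$. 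One small point you should make explicit: the bound $|\partial_x^n(v-\pi v)(\lambda)|\lesssim N^{-(2m-n)}$ for $n\ge m-k$ cannot come from an inverse estimate on $v-\pi v$ (which is not discrete); the paper's route is to note $\partial_x^n(v-\pi v)(\lambda)=\partial_x^n(Iv-\pi v)(\lambda)$ for $n\le m-1$ and apply the inverse estimate to the discrete function $Iv-\pi v$ together with $\norm{Iv-\pi v}{L^\infty(\Omega_c)}\lesssim N^{-2m}$.

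The one place where you genuinely deviate is the Ritz term on $\Omega_c$, and there your argument has a gap. The paper annihilates $\tilde a(v-\pi v,\xi)_{\Omega_c}$ outright by the Ritz orthogonality. You rightly observe that $\xi|_{\Omega_c}$ need not satisfy the homogeneous boundary conditions built into $\pi$, and propose $\xi|_{\Omega_c}=\xi_0+\xi_b$ with $\xi_b$ carrying the boundary data on the two coarse cells adjacent to $\partial\Omega_c$. But the surviving term $\tilde a(v-\pi v,\xi_b)_{\Omega_c}$ is not harmless: the natural estimate via Assumption~\ref{ass:tildea1} gives $\norm{v-\pi v}{W^{m-k,\infty}(\Omega_c)}\norm{\xi_b}{W^{m-k,1}}\lesssim N^{-(m+k)}\cdot N^{-1/2}\enorm{\xi}$, i.e.\ a factor $N^{-1/2}$ where the lemma demands $\eps^{1/2}(\ln N)^{1/2}$. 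Since $\eps$ may be arbitrarily small compared with $N^{-1}$, this does not fit under the claimed bound, and the $\eps^{1/2}$ prefactor cannot be sacrificed here: in passing to Theorem~\ref{thm:highorder1} the bound on $\tilde a(\eta,\xi)$ is effectively divided by $\eps^{1/2}$ to reach the balanced norm. So you must either show that this residual vanishes or genuinely carries an $\eps^{1/2}$, or argue (as the paper implicitly does) that $\xi|_{\Omega_c}$ is an admissible test function for the orthogonality; the phrase ``controlled by inverse and trace estimates'' does not close this step as stated.
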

  \begin{proof}
    The proof follows the proof of Lemma~\ref{lem:reaction} but has some differences in the details. 
    Therefore, we give the full proof here.
    
    We will prove the estimate in the coarse and remaining region separately. 
    Let us start on $\Omega_c$. 
    By definition of $P$ and the orthogonality of the Ritz-error we have
    \[
      |\tilde a(\eta,\xi)_{\Omega_c}|
        =|\tilde a(w,\xi)_{\Omega_c}|
        \lesssim\norm{w}{H^{m-k}(\Omega_c)}\enorm{\xi}
        \lesssim\eps^{1/2} N^{-\sigma}\enorm{\xi}.
    \]
    In the remaining domain we have
    \[
      \tilde a(\eta,\xi)_{\Omega\setminus\Omega_c}
        =\tilde a(u-Iu,\xi)_{\Omega\setminus\Omega_c}+\tilde a(Iu-Pu,\xi)_{\Omega_c^*\setminus\Omega_c}.
    \]
    For the first term it holds with Assumption~\ref{ass:tildea1}
    \begin{align*}
      |\tilde a(u-Iu,\xi)_{\Omega\setminus\Omega_c}|
        &\lesssim\left(\meas^{1/2}(\Omega\setminus\Omega_c)\norm{v-Iv}{W^{m-k,\infty}(\Omega\setminus\Omega_c)}
                +\norm{w-Iw}{H^{m-k}(\Omega\setminus\Omega_c)}\right)\enorm{\xi}\\
        &\lesssim\eps^{1/2}\left(N^{-(m+k)}(\ln N)^{1/2}+\left(h+N^{-1}\max|\psi'|\right)^{m+k}\right)\enorm{\xi},
    \end{align*}
    where the interpolation errors were estimated 
    in the usual way. 
    Local (anisotropic) interpolation error formulas can be found in \cite{SW13}.
    
    For the second term let us look at $\tau=(\lambda-h,\lambda)\subset\Omega_c^*\setminus\Omega_c$, the other interval follows analogously.
    We denote by $\phi_n$ the basis-functions that have as degrees of freedom the $C^n$-compatibility
    at $x=\lambda$ for $n\in\{0,\dots,m-1\}$. Then it holds
    \[
      \norm{\phi_n}{W^{m-k,\infty}(\tau)}\lesssim h^n\left(1+h^{-(m-k)}\right),\quad n\in\{0,\dots,m-1\}.
    \]
    Now we have for the boundary layers $w=w_1+w_2$ and the smooth part $v$
    \[
      Iw-Pw =\sum_{n=0}^{m-1}\partial_x^n w(\lambda)\phi_n\quad\text{ and }\quad
      Iv-Pv =\sum_{n=m-k}^{m-1}\partial_x^n \pi(Iv-v)(\lambda)\phi_n,
    \]
    where the definition of $I$ and the boundary conditions of the Ritz-projection 
    were used in the representations.
    Thus, it follows
    \begin{align*}
      |\tilde a(Iu-Pu,\xi)_{\tau}|
        &\lesssim( \norm{Iv-Pv}{H^{m-k}(\tau)}
                  +\norm{Iw-Pw}{H^{m-k}(\tau)})\enorm{\xi}\\
        &\lesssim\eps^{1/2}( \norm{Iv-Pv}{W^{m-k,\infty}(\tau)}
                  +\norm{Iw-Pw}{W^{m-k,\infty}(\tau)})\enorm{\xi}.
    \end{align*}
    For the first norm we use inverse inequalities and the $L^\infty$-error estimate \eqref{eq:Ritz} of the Ritz-projection
    to obtain
    \begin{align}
      \norm{Iv-Pv}{W^{m-k,\infty}(\tau)}
        &\lesssim \sum_{n=m-k}^{m-1}|\partial_x^n(Iv-\pi v)(\lambda)|\norm{\chi_n}{W^{m-k,\infty}(\tau)}\notag\\
        &\lesssim \sum_{n=m-k}^{m-1}N^n(\norm{Iv-v}{L^\infty(\Omega_c)}+\norm{v-\pi v}{L^\infty(\Omega_c)})h^n\left(1+h^{-(m-k)}\right)\notag\\
        &\lesssim \sum_{n=m-k}^{m-1}N^nN^{-2m}h^n\left(1+h^{-(m-k)}\right)\notag\\
        &\lesssim N^{-(m+k)}\left(1+(Nh)^{k-1}\right),\label{eq:piv-error}
    \end{align}
    while for the second norm we use $h\lesssim \eps$ and $\frac{\eps}{h}\lesssim N$, see \eqref{eq:hbound1}, to obtain
    \begin{align}
      \norm{Iw-Pw}{W^{m-k,\infty}(\tau)}
        &\lesssim \sum_{n=0}^{m-1}|\partial_x^n w(\lambda)|\norm{\chi_n}{W^{m-k,\infty}(\tau)}\notag\\
        &\lesssim \sum_{n=0}^{m-1}\eps^{m-k-n}N^{-\sigma}h^n\left(1+h^{-(m-k)}\right)\notag\\
        &\lesssim N^{-\sigma}\left(1+\left(\frac{\eps}{h}\right)^{m-k}\right)
         \lesssim N^{-(\sigma-(m-k))}.\label{eq:Pw-error}
    \end{align}
    Choosing $\sigma\geq 2m=p+1$ the proof is done by collecting the separate bounds.
  \end{proof}

  \begin{lemma}\label{lem:Hinterpol1}
    Let $\sigma\geq 2m$. Under the Assumptions~\ref{ass:HOdecomp1} and \ref{ass:convex} we have
    \[
      \bnorm{\eta}\lesssim N^{-m}(1+(Nh)^{k-1})+\left(h+N^{-1}\max|\psi'|\right)^{m}.
    \]
  \end{lemma}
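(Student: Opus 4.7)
My plan mirrors that of Lemma~\ref{lem:interpol} and is based on the domain split
\[
\bnorm{\eta}\lesssim \tnorm{\eta}_{b,\Omega_c}+\tnorm{u-Iu}_{b,\Omega\setminus\Omega_c^*}+\tnorm{Iu-Pu}_{b,\Omega_c^*\setminus\Omega_c}.
\]
On $\Omega\setminus\Omega_c^*$ the operator $P$ coincides with the Hermite interpolant, so standard 1D Hermite interpolation error estimates applied to the decomposition of Assumption~\ref{ass:HOdecomp1}, together with the mesh bound \eqref{eq:hbound1}, yield the $(h+N^{-1}\max|\psi'|)^m$ contribution in the balanced norm.

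On the coarse interior $\Omega_c$ I would treat the layer and smooth parts separately. Since $Pw\equiv 0$ there, the contribution of $w$ reduces to $\eps^{k-1/2}|w|_{H^m(\Omega_c)}+\|w\|_{H^{m-k}(\Omega_c)}$, both of which are exponentially small because of the transition parameter $\lambda=\sigma\eps\ln N$, giving $\lesssim N^{-\sigma}$ and so absorbed by $\sigma\geq 2m$. For the smooth part $v-\pi v$ on the uniform coarse mesh (spacing $\sim 1/N$), I would use \eqref{eq:Ritz} together with the standard Ritz analysis to obtain $\|v-\pi v\|_{H^{m-k}(\Omega_c)}\lesssim N^{-(m+k)}$, and split $v-\pi v=(v-Iv)+(Iv-\pi v)$ for the $H^m$-seminorm, applying the inverse inequality $|Iv-\pi v|_{H^m}\lesssim N^k|Iv-\pi v|_{H^{m-k}}$ to the discrete difference. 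This produces $|v-\pi v|_{H^m(\Omega_c)}\lesssim N^{-m}$, hence a $\tnorm{\cdot}_{b,\Omega_c}$-contribution of order $N^{-m}$.

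The main work, and expected main obstacle, is on the transition strip $\Omega_c^*\setminus\Omega_c$. I would reuse the explicit Hermite expansions derived in the proof of Lemma~\ref{lem:tildea1},
\[
Iv-Pv=\sum_{n=m-k}^{m-1}\partial_x^n(Iv-\pi v)(\lambda)\,\phi_n,\qquad
Iw-Pw=\sum_{n=0}^{m-1}\partial_x^n w(\lambda)\,\phi_n,
\]
and combine them with the reference-to-physical scaling $\|\phi_n\|_{H^j(\tau)}\lesssim h^{n-j+1/2}$ obtained from mapping the unit Hermite basis onto a cell of width $h$. Bounding the smooth coefficients by $N^{n-2m}$ (coarse-mesh inverse inequality plus the $L^\infty$-Ritz error \eqref{eq:Ritz}) and the layer coefficients by $\eps^{m-k-n}N^{-\sigma}$ (Assumption~\ref{ass:HOdecomp1}), and then summing over the relevant indices $n$, the layer contribution is absorbed for $\sigma\geq 2m$, while the smooth part yields the announced factor $N^{-m}(1+(Nh)^{k-1})$ after identifying the dominant term in the sum depending on whether $Nh$ exceeds $1$. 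The delicate point is balancing the weight $\eps^{k-1/2}$ against the resulting negative powers of $h$: using \eqref{eq:hbound1} to convert $(\eps/h)^{k-1/2}\lesssim N^{k-1/2}$ is what reduces $\eps^{k-1/2}|Iv-Pv|_{H^m(\tau)}$ to the target $N^{-m}(Nh)^{k-1}$ order, while the corresponding $\|\cdot\|_{H^{m-k}(\tau)}$ contribution is of lower order thanks to the extra factor $h^k$ in the basis scaling.
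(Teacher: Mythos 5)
Your argument is correct and takes essentially the same route as the paper, which proves this lemma simply by repeating the proof of Lemma~\ref{lem:interpol} with the Ritz projection $\pi$, the Hermite basis expansions from Lemma~\ref{lem:tildea1}, and the mesh bounds \eqref{eq:hbound1} in place of their second-order counterparts --- exactly the domain splitting and transition-strip analysis you carry out. The only slip is that on the strip $\Omega_c^*\setminus\Omega_c$ one has $\eta=(u-Iu)+(Iu-Pu)$, so your initial splitting should also retain the term $\tnorm{u-Iu}_{b,\Omega_c^*\setminus\Omega_c}$ (the paper takes the interpolation error over all of $\Omega\setminus\Omega_c$), which is harmless since it is bounded by the same standard interpolation estimate.
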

  \begin{proof}
    We can follow the proof of Lemma~\ref{lem:interpol} line by line.
  \end{proof}
  
  Combining the results of these lemmas gives the main result for the higher-order case.
  \begin{theorem}\label{thm:highorder1}
    Let $\sigma\geq 2m=p+1$ and Assumptions~\ref{ass:HOdecomp1} and \ref{ass:convex} hold. 
    Then we have for the solutions $u$ of \eqref{3.1} and $u^N$ of the corresponding Galerkin method
    \[
      \bnorm{u-u^N}\lesssim N^{-m}(1+(hN)^{k-1})+\left(h+N^{-1}\max|\psi'|\right)^m.
    \]
  \end{theorem}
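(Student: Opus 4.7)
The plan is to mirror the argument of Theorem~\ref{thm:reactdiff}, replacing the ingredients of the second-order setting by their higher-order counterparts (Lemmas~\ref{lem:tildea1} and \ref{lem:Hinterpol1}), and exploiting the fact that the bound on $\tilde{a}(\eta,\xi)$ already carries an explicit $\eps^{1/2}$ factor.

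First I would split the error into the interpolation part $\eta=u-Pu$ and the discrete part $\xi=Pu-u^N\in V^N$. Since $\tilde a(\cdot,\cdot)$ is equivalent to $\|\cdot\|_{H^{m-k}}^2$, coercivity in the energy norm gives
\[
  \enorm{\xi}^2 \lesssim \eps^{2k}(\xi^{(m)},\xi^{(m)}) + \tilde a(\xi,\xi),
\]
and Galerkin orthogonality $\eps^{2k}(\eta^{(m)},\xi^{(m)})+\tilde a(\eta,\xi)=0$ yields
\[
  \enorm{\xi}^2 \lesssim \eps^{2k}\,(\eta^{(m)},\xi^{(m)}) + |\tilde a(\eta,\xi)|.
\]
A Cauchy--Schwarz estimate on the first term produces the factorisation $\eps^{2k}|\eta|_{H^m}|\xi|_{H^m}=\eps^{1/2}(\eps^{k-1/2}|\eta|_{H^m})\cdot \eps^{k}|\xi|_{H^m}\leq \eps^{1/2}\bnorm{\eta}\enorm{\xi}$, which is precisely the place where the balanced norm of $\eta$ enters on the right-hand side.

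Next I would invoke Lemma~\ref{lem:tildea1} to bound the second term, also by $\eps^{1/2}$ times the energy norm of $\xi$. Dividing by $\enorm{\xi}$ gives
\[
  \enorm{\xi} \lesssim \eps^{1/2}\Big(\bnorm{\eta} + N^{-(m+k)}\bigl((Nh)^{k-1}+(\ln N)^{1/2}\bigr) + \bigl(h+N^{-1}\max|\psi'|\bigr)^{m+k}\Big).
\]
The crucial step is then to convert this energy-norm bound into a balanced-norm bound by using the identity $\eps^{k-1/2}|\xi|_{H^m}=\eps^{-1/2}\cdot\eps^{k}|\xi|_{H^m}\leq \eps^{-1/2}\enorm{\xi}$ together with the trivial bound $\|\xi\|_{H^{m-k}}\leq \enorm{\xi}$. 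This exactly cancels the $\eps^{1/2}$ prefactor and gives
\[
  \bnorm{\xi} \lesssim \bnorm{\eta} + N^{-(m+k)}\bigl((Nh)^{k-1}+(\ln N)^{1/2}\bigr) + \bigl(h+N^{-1}\max|\psi'|\bigr)^{m+k}.
\]

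Finally I would apply the triangle inequality $\bnorm{u-u^N}\leq \bnorm{\eta}+\bnorm{\xi}$ and insert Lemma~\ref{lem:Hinterpol1} for $\bnorm{\eta}$. All contributions from $\tilde a(\eta,\xi)$ are then dominated by the interpolation bound $N^{-m}(1+(Nh)^{k-1})+(h+N^{-1}\max|\psi'|)^m$, since $N^{-(m+k)}(Nh)^{k-1}\leq N^{-m}(Nh)^{k-1}$, $N^{-(m+k)}(\ln N)^{1/2}$ is easily absorbed (recall $h\gtrsim \eps N^{-1}\ln N$ and $k\geq 1$), and the higher-order term $(h+N^{-1}\max|\psi'|)^{m+k}$ is absorbed in $(h+N^{-1}\max|\psi'|)^m$. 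This yields the claimed bound. The main technical obstacle is not the coercivity argument itself, which is routine, but checking that every auxiliary $\tilde a$-term can indeed be absorbed into the interpolation bound without introducing an extra logarithmic factor; this is exactly what the $\eps^{1/2}$ factor in Lemma~\ref{lem:tildea1} was engineered to guarantee.
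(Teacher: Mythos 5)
Your proposal is correct and follows essentially the same route the paper intends: the second-order case (Theorem~\ref{thm:reactdiff}) is proved by exactly this coercivity/Galerkin-orthogonality argument with the $\eps^{1/2}$ factors cancelling, and the higher-order theorem is obtained by ``combining the results of these lemmas'' (Lemmas~\ref{lem:tildea1} and \ref{lem:Hinterpol1}) in the same way. The only blemish is the displayed ``Galerkin orthogonality'' $\eps^{2k}(\eta^{(m)},\xi^{(m)})+\tilde a(\eta,\xi)=0$, which should read $\eps^{2k}\bigl((\eta+\xi)^{(m)},\xi^{(m)}\bigr)+\tilde a(\eta+\xi,\xi)=0$; the inequality you actually use afterwards is the correct consequence of it.
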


  \begin{remark}
    Under the additional assumption $Nh\lesssim 1$, which is equivalent to $h\lesssim N^{-1}$, Theorem~\ref{thm:highorder1}
    yields the shorter estimate
    \[
      \bnorm{u-u^N}\lesssim \left(N^{-1}\max|\psi'|\right)^m=\left(N^{-1}\max|\psi'|\right)^{p+1-m}.
    \]
    This assumption on $h$ is true for the Shishkin mesh with
    \[
      \bnorm{u-u^N}\lesssim (N^{-1}\ln N)^{p+1-m}
    \]
    or the Bakhvalov-S-mesh for $\eps\lesssim N^{-1}$ with
    \[
      \bnorm{u-u^N}\lesssim N^{-(p+1-m)}.
    \]
  \end{remark}
  
  \begin{remark}\label{rem:ext2d}
    For the 2d-case similar ideas can be used. Altogether it is a quite technical but 
    straightforward task. We will show the idea for the case $m=2$ and $k=1$, thus a 
    fourth order-problem with $\tilde a(\cdot,\cdot)$ a second order bilinear form like 
    the one considered in \cite{Xenophontos17} in 1d.
    
    We start with an assumption on a decomposition of $u=v+\sum\limits_{i=1}^4(w_i+c_i)$ 
    into a smooth part $v$, four boundary layer parts $w_i$ with $|\partial_x^i\partial_y^j w_1(x,y)|\lesssim \eps^{1-i}\e^{-x/\eps}$
    and four corner layer parts with $|\partial_x^i\partial_y^j c_1(x,y)|\lesssim \eps^{1-i-j}\e^{-x/\eps}\e^{-y/\eps}$ 
    (and analogously for the remaining parts) for $0\leq i,j\leq 2m$. The mesh is defined as in Section~\ref{sec:2}, our discrete space $V^N$
    is the space of bicubic $C^1$-Hermite-splines and $I$ the canonical Hermite-interpolation into $V^N$.
    
    The main task is to define the projection $P$ into $V^N$. We define it separately for each part of the decomposition.
    Let $\Omega_1:=(0,\lambda)\times(0,1)$ and $\Omega_1^*:=(\lambda-h,\lambda)\times(0,1)$. Then
    \[
      Pw_1|_\tau:=\begin{cases}
                    Iw_1,& \tau\subset\Omega_1\setminus\Omega_1^*,\\
                    0, & \tau\subset\Omega\setminus\Omega_1.
                  \end{cases}
    \]
    Again $Pw_1$ is completely defined by $Pw_1\in V^N$. For the corner-component $c_1$ we define similarly 
    $\widehat\Omega_1:=(0,\lambda)\times(0,\lambda)$, $\widehat\Omega_1^*:=(\lambda-h,\lambda)\times(0,\lambda)\cup(0,\lambda)\times(\lambda-h,\lambda)$
    and
    \[
      Pc_1|_\tau:=\begin{cases}
                    Ic_1,& \tau\subset\widehat\Omega_1\setminus\widehat\Omega_1^*,\\
                    0,   & \tau\subset\Omega\setminus\widehat\Omega_1.
                  \end{cases}
    \]
    For the other layer components we proceed similarly. That leaves the smooth part. 
    With $\Omega_c$ and $\Omega_c^*$ from Section~\ref{sec:2} we define
    \[
      Pv|_\tau:=\begin{cases}
                    Iv,    & \tau\subset\Omega\setminus\Omega_c^*,\\
                    \pi v, & \tau\subset\Omega_c,
                  \end{cases}
    \]
    where $\pi v$ is the Ritz-projection into $V^N_c:=\{v\in C^1(\Omega_c)\,:\,v|_\tau\subset \QS_3(\tau)\}$ given by
    \begin{align*}
      \tilde a(v-\pi v,\chi)_{\Omega_c} &= 0\text{ for all }\chi\in V^N_c\cap H^1_0(\Omega_c)\\
      Iv-\pi v&=0\text{ on }\partial\Omega_c.
    \end{align*}
    Note that the boundary condition implies
    \[
      \partial_t(Iv-\pi v)=0\text{ on }\partial\Omega_c,
    \]
    where $\partial_t$ denotes the tangential derivative.
    
    Given this interpolation operator $P$ it is straightforward to show
    \begin{align*}
      \norm{Iw_i-Pw_i}{W^{1,\infty}(\Omega_i^*)}+
      \norm{Ic_i-Pc_i}{W^{1,\infty}(\widehat\Omega_i^*)}&\lesssim N^{-(\sigma-1)},\\
      \norm{Iv-Pv}{W^{1,\infty}(\Omega_c^*\setminus\Omega_c)}&\lesssim(1+hN)N^{-3},
    \end{align*}
    where the additional assumption on the minimal mesh width $h_{min}\geq\eps N^{-1}$ is needed 
    for the first and an $L^\infty$-error estimation for 
    the Ritz projection or an $L^\infty$-stability result for $\pi$ is assumed for the second estimate 
    (for a fourth-order problem discretised on a triangular mesh by Clough-Tocher elements see \cite{Rannacher76}).
    
    Similarly we obtain
    \begin{align*}
      |\tilde a(\eta,\xi)|&\lesssim\eps^{1/2}(1+hN)\left(h+N^{-1}\max|\psi'|\right)^3\enorm{\xi},\\
      \bnorm{\eta} &\lesssim(1+hN)\left(h+N^{-1}\max|\psi'|\right)^2
    \end{align*}
    for $\sigma\geq 4$ by a tedious estimation. Combining above steps gives the result in the 2d-case
    \[
      \bnorm{u-u^N}\lesssim(1+hN)\left(h+N^{-1}\max|\psi'|\right)^2
    \]
    for $\sigma\geq 4$. Note that $p+1-m=2$ is the convergence order.
    
    The extension of these ideas to the general case of $m\geq k\geq1$ is also clear.
    With $p=2m-1$ we use an $C^{m-1}$-Hermite-space with piecewise $\QS_p$-polynomials and define the
    projection $\pi$ as Ritz-projection using higher order boundary conditions depending on $m-k$. 
    Then for $\sigma\geq 2m=p+1$ the result from Theorem~\ref{thm:highorder1} holds also in 2d.
    
    The final extension of above analysis is to increase the polynomial degree to $p\geq 2m$ while preserving
    the $C^{m-1}$-continuity of the discrete space. With a suitable defined operator $I$
    and a properly defined interpolation operator $P$ (using above $\pi$ and ideas from Section~\ref{sec:2})
    the balanced norm estimate can be shown for $\sigma\geq p+1$ and $hN\lesssim 1$ to be
    \[
      \bnorm{u-u^N}\lesssim\left(N^{-1}\max|\psi'|\right)^{p+1-m}.
    \]
  \end{remark}

\bibliographystyle{plain}
\bibliography{lit}

\begin{thebibliography}{10}

\bibitem{CT87}
M.~Crouzeix and V.~Thom{\'e}e.
\newblock The stability in {$L_p$} and {$W_p^1$} of the {$L_2$}-projection onto
  finite element function spaces.
\newblock {\em Math. Comp.}, 48(178):521--532, 1987.

\bibitem{FrR15}
S.~Franz and H.-G. Roos.
\newblock Robust error estimation in energy and balanced norms for singularly
  perturbed fourth order problems.
\newblock {\em Computer Math. Appl.}, 72(1):233--247, 2016.

\bibitem{HK90}
H.~Han and R.~B. Kellogg.
\newblock Differentiability properties of solutions of the equation
  {$-\varepsilon\sp 2\Delta u+ru=f(x,y)$} in a square.
\newblock {\em SIAM J. Math. Anal.}, 21(2):394--408, 1990.

\bibitem{LX06}
S.-T. Liu and Y.~Xu.
\newblock Galerkin methods based on {H}ermite splines for singular perturbation
  problems.
\newblock {\em SIAM J. Numer. Anal.}, 43(6):2607--2623, 2006.

\bibitem{Natterer77}
F.~Natterer.
\newblock Uniform convergence of {G}alerkin's method for splines on highly
  nonuniform meshes.
\newblock {\em Math. Comp.}, 31(138):457--468, 1977.

\bibitem{Nitsche75}
J.~A. Nitsche.
\newblock {$L_{\infty }$}-convergence of finite element approximation.
\newblock In {\em Journ\'{e}es ``\'{E}l\'{e}ments {F}inis'' ({R}ennes, 1975)},
  page~18. Univ. Rennes, Rennes, 1975.

\bibitem{Oswald13}
P.~Oswald.
\newblock {$L_\infty$}-bounds for the {$L_2$}-projection onto linear spline
  spaces.
\newblock In Dmitriy Bilyk, Laura De~Carli, Alexander Petukhov, Alexander~M.
  Stokolos, and Brett~D. Wick, editors, {\em Recent Advances in Harmonic
  Analysis and Applications}, volume~25 of {\em Springer Proceedings in
  Mathematics \& Statistics}, pages 303--316. Springer New York, 2013.

\bibitem{Rannacher76}
R.~Rannacher.
\newblock Punktweise {K}onvergenz der {M}ethode der finiten {E}lemente beim
  {P}lattenproblem.
\newblock {\em Manuscripta Math.}, 19(4):401--416, 1976.

\bibitem{Roos17}
H.-G. Roos.
\newblock Error estimates in balanced norms of finite element methods on
  layer-adapted meshes for second order reaction-diffusion problems.
\newblock In {\em Boundary and interior layers, computational and asymptotic
  methods---{BAIL} 2016}, volume 120 of {\em Lect. Notes Comput. Sci. Eng.},
  pages 1--18. Springer, Cham, 2017.

\bibitem{RL99}
H.-G. Roos and T.~Lin{\ss}.
\newblock Sufficient conditions for uniform convergence on layer-adapted grids.
\newblock {\em Computing}, 63:27--45, 1999.

\bibitem{RSch15}
H.-G. Roos and M.~Schopf.
\newblock Convergence and stability in balanced norms of finite element methods
  on {S}hishkin meshes for reaction-diffusion problems.
\newblock {\em ZAMM}, 95(6):551--565, 2015.

\bibitem{SW13}
L.~Schumaker and L.~Wang.
\newblock On {H}ermite interpolation with polynomial splines on {T}-meshes.
\newblock {\em J. Comput. Appl. Math.}, 240:42--50, 2013.

\bibitem{Scott76}
R.~Scott.
\newblock Optimal {$L^{\infty }$} estimates for the finite element method on
  irregular meshes.
\newblock {\em Math. Comp.}, 30(136):681--697, 1976.

\bibitem{SS95}
G.~F. Sun and M.~Stynes.
\newblock Finite-element methods for singularly perturbed high-order elliptic
  two-point boundary value problems. {II}. {C}onvection-diffusion-type
  problems.
\newblock {\em IMA J. Numer. Anal.}, 15(2):197--219, 1995.

\bibitem{Xenophontos17}
Chr. Xenophontos.
\newblock A parameter robust finite element method for fourth order singularly
  perturbed problems.
\newblock {\em Comput. Methods Appl. Math.}, 17(2):337--349, 2017.

\end{thebibliography}

\end{document}